\documentclass[12pt]{amsart}   % For Latex2e
\usepackage[english]{babel}
\usepackage{mathtools}
\usepackage{amssymb,amscd, graphics,color,latexsym,cancel}   % For Latex2e
\usepackage[linktoc=all, pagebackref, hyperindex]{hyperref}%
\hypersetup{colorlinks,  citecolor=blue,  filecolor=blue,  linkcolor=blue,  urlcolor=black}
\usepackage{tikz-cd} 
\usepackage{extpfeil}

\usepackage{tikz}
\usetikzlibrary{matrix,calc}
\usepackage{mathrsfs}
\usepackage[all]{xy}
\usepackage{comment}
\usepackage{amsfonts}
\usepackage[normalem]{ulem}

\usepackage{euscript}
\usepackage{amsmath}
\usepackage{ifpdf}
\usepackage{cleveref}
\LARGE\textwidth=6in
\newcommand{\lar}{\longrightarrow}
\newcommand{\surjects}{\twoheadrightarrow}

\newtheorem{Theorem}{Theorem}[section]

\newtheorem{Corollary}[Theorem]{Corollary}
\newtheorem{Proposition}[Theorem]{Proposition}

\theoremstyle{definition}
\newtheorem{Remark}[Theorem]{Remark}
\newtheorem{Example}[Theorem]{Example}
\newtheorem{Conjecture}[Theorem]{Conjecture}

\newtheorem{Question}[Theorem]{Question}
\def\sqr#1#2{{\vcenter{\hrule height.#2pt
			\hbox{\vrule width.#2pt height#1pt \kern#1pt
				\vrule width.#2pt}
			\hrule height.#2pt}}}
\def\phi{\varphi}

\def\VaVa{{\mathcal V}\kern-5pt {\mathcal V}}
\def\gr#1#2{{\rm gr}\, _{#1}(#2)}
\def\gr{{\rm gr}\,}
\def\hht{{\rm ht}\,}
\def\depth{{\rm depth}\,}

\def\Min{{\rm Min}\,}
\def\codim{{\rm codim}\,}

\def\ker{{\rm ker}\,}
\def\grade{{\rm grade}\,}
\def\rk{\rm rank}

\def\sym#1#2{\mbox{\rm Sym}_{#1}(#2)}

\def\Ext#1#2#3#4{{\rm Ext}\,^{#1}_{#2}({#3},{#4})}

\def\supp#1{{\rm Supp}\, (#1)}

\def\ini{\mbox{\rm in}}

\def\Rees{{\mathcal R}}
\def\sym{{\mathrm{Sym}}}
\def\cl#1{{\mathcal #1}}

\def\Rees{\mathcal{R}}

\def\phi{\varphi}
\def\Rees{{\cal R}}
\def\hht{{\rm ht}\,}
\def\grade{{\rm grade}\,}

\def\fm{{\mathfrak m}}
\def\fn{{\mathfrak n}}

\def\fp{{\mathfrak p}}

\def\fm{{\mathfrak m}}
\def\fn{{\mathfrak n}}

\def\cl#1{{\cal #1}}
\def\rk{\rm rank}

\newcommand{\excise}[1]{}
\def\NZQ{\mathbb}               % the font for N,Z,Q,R,C

\def\AA{{\NZQ A}}

\def\G{{\mathcal G}}

\def\Q{{\mathcal Q}}
\def\opn#1#2{\def#1{\operatorname{#2}}} % to make operators
\opn\chara{char} \opn\length{\ell} \opn\pd{pd} \opn\rk{rk}
\opn\projdim{proj\,dim} \opn\injdim{inj\,dim} \opn\rank{rank}
\opn\depth{depth} \opn\grade{grade} \opn\height{height}
\opn\embdim{emb\,dim} \opn\codim{codim}

\opn\Tr{Tr} \opn\bigrank{big\,rank}
\opn\superheight{superheight}\opn\lcm{lcm}
\opn\trdeg{tr\,deg}%\emph{
	\opn\reg{reg} \opn\lreg{lreg} \opn\ini{in} \opn\lpd{lpd}
	\opn\size{size} \opn\sdepth{sdepth}
	\opn\link{link}\opn\fdepth{fdepth}\opn\lex{lex}
	\opn\tr{tr}
	\opn\type{type}
	\opn\div{div} \opn\Div{Div} \opn\cl{cl} \opn\Cl{Cl}
	\opn\Spec{Spec} \opn\Supp{Supp} \opn\supp{supp} \opn\Sing{Sing}
	\opn\Ass{Ass} \opn\Min{Min}\opn\Mon{Mon}
	\opn\Ho{H}
	\opn\Ann{Ann} \opn\Rad{Rad} \opn\Soc{Soc}
	\opn\Im{Im} \opn\Ker{Ker} \opn\Coker{Coker} \opn\Am{Am}
	\opn\Hom{Hom} \opn\Tor{Tor} \opn\Ext{Ext} \opn\End{End}
	\opn\Aut{Aut} \opn\id{id}
	
	\opn\nat{nat}
	\opn\pff{pf}%   \pf exists already
	\opn\Pf{Pf} \opn\GL{GL} \opn\SL{SL} \opn\mod{mod} \opn\ord{ord}
	\opn\Gin{Gin} \opn\Hilb{Hilb}\opn\sort{sort}
	\opn\PF{PF}\opn\Ap{Ap}\opn\HF{HF}\opn\indeg{indeg}
	\opn\aff{aff} \opn
	\con{conv} \opn\relint{relint} \opn\st{st}
	\opn\lk{lk} \opn\cn{cn} \opn\core{core} \opn\vol{vol}  \opn\inp{inp} \opn\nilpot{nilpot}
	\opn\link{link} \opn\star{star}\opn\lex{lex}\opn\set{set}
	\opn\width{wd}
	\opn\Fr{F}
	\opn\QF{QF}
	\opn\G{G}
	\opn\type{type}\opn\res{res}
	\opn\log{Log}
	\opn\gr{gr}
	\def\Rees{{\mathcal R}}
	\def\pot#1#2{#1[\kern-0.28ex[#2]\kern-0.28ex]}

	\opn\dirlim{\underrightarrow{\lim}}
	\opn\inivlim{\underleftarrow{\lim}}
\begin{document}
		%\begin{titlepage}
		
		\title[Burity, Simis and Toh\u aneanu Conjectures]{Closing two recent conjectures related to the Jacobian ideal of hyperplane arrangements} 
		
		\author{Abbas Nasrollah Nejad}
		\address{Department of Mathematics, Institute for Advanced Studies in Basic Sciences (IASBS), Zanjan 45137-66731, Iran}
		\email{abbasnn@iasbs.ac.ir}
		
		\author{Aron Simis} 
		\address{Departamento de Matemática, Universidade Federal da Pernambuco, Recife, Pernambuco, 50740-560, Brazil,}
		\email{aron.simis@ufpe.br}
		\thanks{The second author was partially supported by a grant from CNPq (Brazil) (304800/2024-4).}
		
		\subjclass[2020]{13A30, 14N20, 32S22}   	
		\keywords{Central hyperplane arrangement, Jacobian ideal, Rees algebra, linear type, minimal reduction, Pfaffian}
		%%%%%%%%%%%%%%%%%%%%%%%%%%%%%%%%%%%%%%%%%%%%%%%%%%%%%
		%%%%%%%%%%%%%%%%%%%%Absract%%%%%%%%%%%%%%%%
	\begin{abstract}
This work is about two conjectures stated by Burity--Simis--Toh\u{a}neanu regarding the Jacobian ideal of the defining polynomial of a central arrangement of $m$ hyperplanes. One settles one of these conjectures referring to the
Jacobian ideal being a minimal reduction of the ideal of $(m-1)$-fold products.  The second conjecture claiming the linear type property of the Jacobian ideal is disproved in rank at least four, by means of an explicit counter-example. In the latter the corresponding Rees algebra admits a torsion defining equation which is a Pfaffian syzygetic obstruction in degree two.
One also relates this Pfaffian obstruction to circuits and codimension-two flats of the arrangement.
	\end{abstract}
		\maketitle
		%%%%%%%%%%%%%%%%%%%%%%%%%%%%%%%%%%%%%%%%%%%%%%%%%%%%%%
		%%%%%%%%%%%%%%%%%%%%Introduction%%%%%%%%%%%%%%%%
		\section{Introduction}
		Let $\mathcal A$ be a central hyperplane arrangement of rank $n$ over a field $k$, and let $R=k[x_1,\ldots,x_n]$. Say, $\mathcal A$ is defined by linear forms
		$\ell_1,\ldots,\ell_m\in R_1$, with defining polynomial
		$f:=\ell_1\cdots \ell_m$.
		Setting $f_{x_i}:=\partial f/\partial x_i$, let
		\[
		J_f:=(f_{x_1},\ldots,f_{x_n})
		\]
		stand for the associated Jacobian (gradient) ideal. The properties and algebraic invariants of $J_f$ reflect both the geometry and the combinatorics of the arrangement. A distinguished role is played by the Rees algebra $\Rees_R(J_f)$ of $J_f$, expressing the homogeneous coordinate ring of the blowup along the Jacobian scheme, and its close associate the symmetric algebra $\sym_R(J_f)$, defined by the equations coming from the first syzygy module of the Jacobian ideal. One says that $J_f$ is an ideal of \emph{linear type} provided the canonical surjective algebra homomorphism
		\[
		\sym_R(J_f)\rightarrow  \Rees_R(J_f)
		\]
		is injective as well, in which case the isomorphism means that the blowup is defined by ``linear'' equations.
		
		One assume throughout that $k$ has zero (or sufficiently high) characteristic.

Jacobian ideals of arrangements have been studied from several points of view. For generic arrangements, the structure of logarithmic forms and logarithmic derivations was described by Yuzvinsky \cite{Yuzvinsky} and Rose--Terao \cite{RoseTerao}. 
The blowup algebra associated to the ideal $I\subseteq R$ generated by the $(m-1)$-fold products of the defining linear forms was studied by Garrousian--Simis--Toh\u{a}neanu \cite{GST}. More recently, Burity, Simis and
		Toh\u{a}neanu \cite{BST-almost} studied the relationship between this ideal and the Jacobian ideal $J_f$. Assuming that $k$ is as above,  they posed the following two conjectures:
		
		 \begin{Conjecture}[{\cite[Conjecture 1.1]{BST-almost}}] \label{red_conj}
		 	%\label{thm:BST-reduction-conjecture}
		 	With the above notation, $J_f$ is a minimal reduction of $I$, with reduction number
		 	$\operatorname{red}_{J_f}(I)\leq n-1$.
		 \end{Conjecture}
		 
		 In \emph{loc. cit.} the authors prove this conjecture in the case of an almost (i.e., $(n-1)$)-generic arrangement - in particular, for arrangements of rank at most $3$.
		
	%Among other results, they proved that $J_f$ is of linear type for central arrangements of rank $\leq 3$ and proposed the following conjecture.

		\begin{Conjecture}[{\cite[Conjecture~2.12]{BST-almost}}]\label{conj:BST}
		Notation as above, suppose that $m\geq n$. If $\operatorname{char}(k)\nmid m$, then the Jacobian ideal $J_f$ is of linear type.
		\end{Conjecture}
		It has been proved in \cite{BST-almost} that $J_f$ is of linear type provided the arrangement has rank $\leq 3$. 
		In addition, the conjecture is true for generic arrangements, as later obtained in  \cite[Theorem 3.10]{LORS}.
		
		The main goal of this paper is to prove the first of these conjectures  affirmatively, and to give examples to show that the second conjecture is false in any dimension $\geq 4$.
		
		The proof of the first conjecture is based on a criterion established in the original work by Burity, Simis and
		Toh\u{a}neanu in \cite{BST-almost}.
		For disproving the second conjecture one relies on a particular example of a rank four arrangement $\mathcal A$, by resorting to properties of the minimal free resolution of the corresponding Jacobian ideal, some of which were long established by Simis, Ulrich and Vasconcelos, involving the role of a Pfaffian element.
		One adds a discussion as to
		how the rank four counterexample is reflected in the combinatorics of the arrangement. The relevant features are the excess $m-n$, measuring the space of linear relations among the defining forms, and the
	    presence of $3$-circuits --  equivalently, triple codimension two flats. In the example, these data provide the combinatorial background for the quadratic Pfaffian equation appearing in the defining ideal of the Rees algebra of the Jacobian ideal.

		%%%%%%%%%%%%%%%%%%%%%%%%%%%%%%%%%%%%%%%%%%%%%%%
		\section{The minimal reduction conjecture}
		\label{sec:BST-reduction-conjecture}
		
		The present argument to prove Conjecture~\ref{red_conj} is based on the criterion established in \cite[Lemma~1.2]{BST-almost} in terms of the Orlik-Terao algebra. 
		
		We briefly expand on the main related players.
		
Let	$\mathcal A=\{H_i=V(\ell_i)\}_{i=1}^m$ be a central arrangement of rank $n$, with defining forms $\{\ell_1,\ldots,\ell_m\}\subseteq R:=k[x_1,\ldots,x_n]$.
With the notation established in the introduction, set in addition 
\[
L_i:=f/\ell_i,\qquad
I:=(L_1,\ldots,L_m).
\]
		Thus, $I=(L_1,\ldots,L_m)\subseteq R$ is the ideal of $(m-1)$-fold products of the defining forms of
		$\mathcal A$. Note that the Jacobian ideal $J_f$ is a codimension $2$ subideal of $I$.
		
%	Recall that if $J\subseteqeq I$ are ideals in a Noetherian ring, then $J$ is a reduction of $I$ if $I^{r+1}=JI^r$ for some sufficiently large $n$. The least such $r$ is the reduction number of $I$ with respect to $J$, denoted $\operatorname{red}_J(I)$. A reduction is minimal if it is minimal with respect	to inclusion.
%The first conjecture of Burity--Simis--Toh\u{a}neanu asserts that $J_f$ is a minimal reduction of $I$. They proved this conjecture in several cases, in particular for $(n-1)$-generic arrangements. We prove it for arbitrary central arrangements in characteristic zero by using the Orlik--Terao presentation of the special fiber of $I$.
		
Let $\mathfrak{R}=k[T_1,\ldots,T_m]$ stand for a polynomial ring over $k$ in new indeterminates. Recall from \cite{OrlikTerao} and \cite[Theorem~2.4]{GST} the Orlik--Terao ideal of $\mathcal A$ as the kernel $Q_{\mathcal A}$ of the $k$-algebra surjection
		\[
		\mathfrak{R}\longrightarrow k\left[\frac{1}{\ell_1},\ldots,\frac{1}{\ell_m}\right],
		\quad
		T_i\longmapsto \frac{1}{\ell_i}.
		\]
		Since $L_i=f/\ell_i$, the homogeneous relations among the reciprocals
		$1/\ell_i$ are the same as the homogeneous relations among the products
		$L_i$. 
	%Hence
	%	\[
	%	Q_{\mathcal A}=
	%	\ker\left(
	%	\mathfrak{R}\longrightarrow k[L_1,\ldots,L_m],
	%	\qquad
	%	T_i\longmapsto L_i
	%	\right).
	%	\]
	It follows that the Orlik--Terao algebra is the special fiber of $I$: 
		\[
		\mathcal F(I)
		%\bigoplus_{q\geq0}I^q/\mathfrak m I^q
		\simeq k[L_1,\ldots,L_m]
		\simeq \mathfrak{R}/Q_{\mathcal A}
	%	,\qquad \mathfrak m=(x_1,\ldots,x_n).
		\]
		
Next recall that a nonempty subset $C\subseteq\{1,\ldots,m\}$ is called a circuit of $\mathcal A$ provided $\{\ell_i\}_{i\in C}$ is linearly dependent and every proper subset
		$\{\ell_i\}_{i\in D}$, $D\subsetneq C$, is linearly independent. 
%Equivalently, $\codim\bigcap_{i\in C}H_i<|C|$, while $\codim\bigcap_{i\in D}H_i=|D|$ for every proper subset $D\subsetneq C$.
		Then, $Q_{\mathcal A}$ is generated by the circuit relations in the sense that,  if $C\subseteq\{1,\ldots,m\}$ is a circuit and
	$\sum_{i\in C}c_i\ell_i=0,\, c_i\in k^*,$
		then
		\[
		\partial C:=
		\sum_{i\in C}c_i
		\prod_{\substack{h\in C\\ h\neq i}}T_h \in Q_{\mathcal A}
		\]
		generate $Q_{\mathcal A}$ as $C$ runs over all circuits of $\mathcal A$.
		
%For $i=1,\ldots,m$, write $\ell_i=\sum_{j=1}^n a_{ji}x_j$. Then the image of $f_{x_j}$ for $j=1,\ldots,n$ in $\mathcal F(I)$ is the linear form 

	We rephrase Conjecture~\ref{red_conj} as a theorem.	
	\begin{Theorem}[$k$ of characteristic zero] \label{thm:BST-reduction-conjecture}
	 The gradient ideal $J_f$ is a minimal reduction of $I$ and
			$\operatorname{red}_{J_f}(I)\leq n-1$.
		\end{Theorem}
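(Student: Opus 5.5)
The plan is to reduce everything to the special fiber $\mathcal F(I)\simeq \mathfrak{R}/Q_{\mathcal A}$, via the criterion of \cite[Lemma~1.2]{BST-almost}. Write $\ell_i=\sum_{j=1}^n a_{ji}x_j$. Then
\[
f_{x_j}=\sum_{i=1}^m a_{ji}L_i ,
\]
so $J_f\subseteq I$ is generated by $k$-linear combinations of the generators of $I$. In $\mathcal F(I)$ the images of the $f_{x_j}$ are the linear forms
\[
\lambda_j:=\sum_i a_{ji}T_i .
\]
Since $I$ is generated in a single degree, the standard theory of reductions (as packaged in the quoted lemma) says that $J_f$ is a reduction of $I$ if and only if $\lambda_1,\ldots,\lambda_n$ generate an ideal of $\mathfrak{R}/Q_{\mathcal A}$ primary to the irrelevant ideal.

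The Orlik--Terao algebra is a domain of dimension $n=\operatorname{rank}\mathcal A$, because it is the coordinate ring of the closure of the reciprocal plane. So $\ell(I)=n$, and once $J_f$ is shown to be a reduction it is automatically minimal, since it is generated by $n=\ell(I)$ elements. It therefore suffices to show that $\lambda_1,\ldots,\lambda_n$ form a system of parameters. Geometrically, this means $V(Q_{\mathcal A})\cap V(\lambda_1,\ldots,\lambda_n)=\{0\}$ over $\bar k$.

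\textbf{Main step.} I expect this to be the main obstacle, because it needs the structure of the closure of the reciprocal plane, not just its open part. I would use the known stratification of $V(Q_{\mathcal A})$, which can be derived from the circuit generators $\partial C$ by a support argument. Any point $t\in V(Q_{\mathcal A})$ has support $S=\{i: t_i\neq 0\}$, and there is a vector $v$ with
\[
\ell_i(v)\neq 0 \ \text{ and } \ t_i=1/\ell_i(v)\quad \text{for } i\in S .
\]
Here $S$ is the complement of a flat, and one takes $v$ generic in the reciprocal plane of the corresponding restricted subarrangement. If I cannot cite this cleanly, I would prove it directly: apply the relations $\partial C$ to circuits meeting $S$, and show that the vector $(t_i^{-1})_{i\in S}$ satisfies every linear relation among $\{\ell_i\}_{i\in S}$ that restricts compatibly. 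This forces it to lie in the row space, i.e., to be of the form $(\ell_i(v))_{i\in S}$.

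Now suppose also $\lambda_j(t)=0$ for all $j$. This says that the linear form $\sum_{i\in S}t_i\ell_i$ is identically zero. Evaluating at $v$ gives
\[
0=\sum_{i\in S}t_i\ell_i(v)=\sum_{i\in S}1=|S| .
\]
In characteristic zero this forces $S=\emptyset$, i.e.\ $t=0$. Hence $J_f$ is a minimal reduction.

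\textbf{Reduction number.} Since the $\lambda_j$ form a linear system of parameters, $\operatorname{red}_{J_f}(I)$ is the largest degree in which $\mathcal F(I)/(\lambda)$ is nonzero. The Orlik--Terao algebra is Cohen--Macaulay (Proudfoot--Speyer), so the $\lambda_j$ form a regular sequence. Thus $\mathcal F(I)/(\lambda)$ has Hilbert series equal to the $h$-polynomial, which by Terao's formula is $t^{n}\pi(\mathcal A,(1-t)/t)$ up to normalization. This polynomial has degree at most $n-1$, because $\pi(\mathcal A,s)$ has degree $n$ and its top coefficient cancels appropriately (equivalently, the broken-circuit complex is a cone). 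Hence $\operatorname{red}_{J_f}(I)\le n-1$.
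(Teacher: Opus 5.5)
Your proof is correct and has the same skeleton as the paper's: the fiber criterion of \cite[Lemma~1.2]{BST-almost}, then $\ell(I)=\dim\mathcal F(I)=n$ for minimality, then Cohen--Macaulayness of the Orlik--Terao algebra for the reduction number. The difference is in the key step, showing that $V(Q_{\mathcal A},\eta_1,\ldots,\eta_n)$ is the origin (your $\lambda_j$ are the paper's $\eta_j$).

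Let $t\neq 0$ be a point with support $S$. The paper uses only the relation $\sum_{i\in S}t_i\ell_i=0$ supplied by the $\eta_j$. Substituting $T_h=1/\ell_h$ shows directly that $G_S=\sum_{i\in S}t_i\prod_{h\in S\setminus\{i\}}T_h\in Q_{\mathcal A}$, and then $G_S(t)=|S|\prod_{h\in S}t_h\neq 0$. You instead first prove $(t_i^{-1})_{i\in S}=(\ell_i(v))_{i\in S}$. This uses the relations of circuits contained in $S$, plus the fact that circuit vectors span the linear relations among $\{\ell_i\}_{i\in S}$; only then do you evaluate at $v$. The quantity computed is the same, since $G_S(t)=\prod_{h\in S}t_h\cdot\sum_{i\in S}t_i\,t_i^{-1}$. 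So the paper's route is more economical: it needs neither that $Q_{\mathcal A}$ is generated by circuit relations nor any description of the points of $V(Q_{\mathcal A})$.

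Your write-up does buy two things, independently of the key step.
\begin{enumerate}
\item[(1)] You handle an arbitrary $k$ directly. You test zeros over $\bar k$ of an ideal defined over $k$ and get primariness from the Nullstellensatz. This sidesteps the paper's faithfully flat descent of $I^n=J_fI^{n-1}$ and of minimality. The paper's $G_S$ would serve equally well in that framing.
\item[(2)] You give an explicit reason for $\operatorname{red}_{J_f}(I)\leq n-1$, namely Terao's Hilbert series together with $\pi(\mathcal A,-1)=0$. The paper instead imports this from \cite[Corollary~2.6(d)]{GST}.
\end{enumerate}

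There are three small slips, none of which affects the argument.
\begin{enumerate}
\item[(a)] The support of a point of $V(Q_{\mathcal A})$ is a flat, not the complement of one. For $x,y,x+y$, the support $\{2,3\}$ is impossible, since $T_2T_3+T_1T_3-T_1T_2$ would not vanish.
\item[(b)] For the row-space claim you need the circuits contained in $S$, not those merely meeting $S$.
\item[(c)] The $h$-polynomial is $(1-t)^n\pi(\mathcal A,t/(1-t))$, not $t^n\pi(\mathcal A,(1-t)/t)$ ``up to normalization''. Its $t^n$-coefficient is $(-1)^n\pi(\mathcal A,-1)=0$, so your degree bound stands.
\end{enumerate}
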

		
		\begin{proof}
			We first prove the result when $k$ is algebraically closed.
			
			Let $a_{j,i}: = \partial l_i /\partial x_j$ be the $x_j$-coefficient of $l_i$, and set 
			$\eta_j:=\sum_{i=1}^m a_{ji}T_i$.

%The inclusion $J_f\subseteq I$ follows from $f_{x_j}=\sum_{i=1}^m a_{ji}L_i$. 
			By the fiber criterion of
			\cite[Lemma~1.2]{BST-almost}, it is enough to prove that the ideal
			\[
			(Q_{\mathcal A},\eta_1,\ldots,\eta_n)\subseteq \mathfrak{R}
			\]
			is primary to the maximal ideal $(T_1,\ldots,T_m)$.
			
			Suppose that $0\neq\lambda=(\lambda_1,\ldots,\lambda_m)\in k^m$ is a
			zero of $V(Q_{\mathcal A},\eta_1,\ldots,\eta_n)\subseteq \AA^{m}$. Set
			$S=\{i\mid \lambda_i\neq0\}$. Since $\eta_j(\lambda)=0$ for every $1\leq j\leq n$, one has
			\[
			\sum_{i\in S}\lambda_i\ell_i=0,
			\]
			 a nonzero $k$-linear dependence relation triggering the following element of the
			 Orlik--Terao ideal:
			\[
			G_S:=\sum_{i\in S}\lambda_i
			\prod_{\substack{h\in S\\ h\neq i}}T_h\in Q_{\mathcal A}.
			\]
			Now, since $\lambda\in V(Q_{\mathcal A})$ then $G_S(\lambda)=0$. However,
			\[
			G_S(\lambda)
			=
			\sum_{i\in S}\lambda_i
			\prod_{\substack{h\in S\\ h\neq i}}\lambda_h
			=
			|S|\prod_{h\in S}\lambda_h\neq0,
			\]
			because $\operatorname{char}k=0$ and every $\lambda_h\neq 0$, for every $h\in S$. This contradiction shows that the only zero of
			$V(Q_{\mathcal A},\eta_1,\ldots,\eta_n)$ is the origin. Hence
			\[
			\sqrt{(Q_{\mathcal A},\eta_1,\ldots,\eta_n)}=(T_1,\ldots,T_m),
			\]
			and therefore $(Q_{\mathcal A},\eta_1,\ldots,\eta_n)$ is
			$(T_1,\ldots,T_m)$-primary. Thus $J_f$ is a reduction of $I$.
			
			Since $\mathcal F(I)$ is the Orlik--Terao algebra, one has
			$\ell(I)=\dim\mathcal F(I)=n$; see \cite[Corollary~2.6]{GST}. Since $J_f$ is
			generated by $n$ elements, this reduction is minimal.
			
			Finally, by \cite[Corollary~2.6(d)]{GST}, $\mathcal F(I)$ is Cohen--Macaulay and
			 the (absolute) reduction number is bounded by $n-1$.
			Now, 	$f_{x_j} = \sum_{i=1}^m a_{ji}\frac{f}{\ell_i}
			= \sum_{i=1}^m a_{ji}L_i$, for $1\leq j\leq n$, hence the respective preimages -- $\{\eta_j, 1\leq j\leq n\}$ modulo $Q_{\mathcal A}$ -- in $\mathcal F(I)$
			 form a linear system of parameters
			of $\mathcal F(I)$. Therefore,
			$
			\operatorname{red}_{J_f}(I)\leq n-1,
			$
			as well.
			
			This settles the proof for $k$ algebraically closed.
			To conclude, in general let $\overline{k}$ denote the
			algebraic closure of $k$. Set $\overline{R}=R\otimes_k\overline{k}\simeq \overline{k}[x_1,\ldots,x_n]$, $\overline{I}=I\overline{R}$ and $\overline{J}=J_f\overline{R}$. 
			Let $\overline{f}=f\otimes 1\in\overline{R}$. Since differentiation with
			respect to $x_j$ is computed term by term on the variables, it commutes with
			extension of scalars, namely,
			\[
			(\overline{f})_{x_j}=(f_{x_j})\otimes1
			\qquad\text{for }j=1,\ldots,n.
			\]
			Hence $\overline{J}=J_f\overline{R}=J_{\overline{f}}$.
			The induced arrangement over $\overline{k}$ still has rank $n$, so the algebraically closed case gives
			$\overline{I}^{\,n}=\overline{J}\,\overline{I}^{\,n-1}$. Thus
			\[
			\bigl(I^n/J_fI^{n-1}\bigr)\otimes_R\overline R
			\simeq
			I^n\overline R/J_fI^{n-1}\overline R=\overline{I}^{\,n}/\overline{J}\,\overline{I}^{\,n-1}
			=0.
			\]
			Since $\overline R$ is faithfully flat over $R$, it follows that $I^n/J_fI^{n-1}=0$, hence $I^n=J_fI^{n-1}$. Thus $J_f$ is a reduction of $I$ and
			$\operatorname{red}_{J_f}(I)\leq n-1$ over $k$. 
			
			It remains to descend minimality. Suppose $\mathcal{J}\subseteq J_f$ is a reduction of
			$I$. Then, $\mathcal{J}\overline R\subseteq \overline J$ is a reduction of $\overline I$. Since $\overline J$
			is a minimal reduction of $\overline I$, one has $\mathcal{J}\overline R=\overline J$. Hence
			\[
			(J_f/\mathcal{J})\otimes_R\overline R=0.
			\]
			By faithful flatness, $J_f/\mathcal{J}=0$, so $\mathcal{J}=J_f$. Therefore $J_f$ is a minimal
			reduction of $I$ over $k$.
		\end{proof}
		
		\begin{Corollary}\label{cor:OT-artinian-reduction}
			With the previous notation, one has:
			\begin{enumerate}
				\item[{\rm (i)}] The ring\,
		$\mathcal G_{\mathcal A}:=
	\mathfrak{R}/(Q_{\mathcal A}, \eta_1,\ldots,\eta_n)$
			is an Artinian reduction of the Orlik--Terao algebra. Moreover,
			$(\mathcal G_{\mathcal A})_q=0$, for $q\geq n.$
			\item[{\rm (ii)}] The embedding dimension of the above Artinian Orlik--Terao reduction is
			\[
			\operatorname{embdim}\mathcal G_{\mathcal A}=m-n.
			\]
			Equivalently, the excess difference $m-n$ is the number of independent $k$-linear parameters
			remaining after the Jacobian reduction.
		\end{enumerate}
		\end{Corollary}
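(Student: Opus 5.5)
Both parts will be read off from the proof of Theorem~\ref{thm:BST-reduction-conjecture}, using the structure of the Orlik--Terao algebra $\mathcal F(I)\simeq\mathfrak R/Q_{\mathcal A}$ recalled from \cite{GST}. For (i), that proof shows $(Q_{\mathcal A},\eta_1,\ldots,\eta_n)$ is $(T_1,\ldots,T_m)$-primary, so $\mathcal G_{\mathcal A}$ is Artinian. The theorem is proved first over $\overline k$, so I will argue there and then descend, since formation of $\mathcal G_{\mathcal A}$ and its graded pieces commutes with the flat extension $k\subseteq\overline k$. By \cite[Corollary~2.6]{GST}, $\mathcal F(I)$ is a standard graded Cohen--Macaulay domain of dimension $n$. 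The images of $\eta_1,\ldots,\eta_n$ are $n$ linear forms generating an ideal primary to the irrelevant ideal, hence a linear system of parameters, hence a regular sequence. Thus $\mathcal G_{\mathcal A}$ is an Artinian reduction of $\mathcal F(I)$, and its Hilbert function is the $h$-vector of $\mathcal F(I)$.

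The vanishing $(\mathcal G_{\mathcal A})_q=0$ for $q\geq n$ I would deduce from $\operatorname{red}_{J_f}(I)\leq n-1$. Translating to the fiber, $I^n=J_fI^{n-1}$ gives $\mathcal F(I)_n=(\eta)\mathcal F(I)_{n-1}$, since $\mathcal F(I)_q=I^q/\fm I^q$ and the image of $J_fI^{n-1}$ is $(\eta)_1\mathcal F(I)_{n-1}$. Hence $(\mathcal G_{\mathcal A})_n=0$, and since $\mathcal G_{\mathcal A}$ is standard graded, all higher pieces vanish too. Alternatively, one could cite the fact that the Castelnuovo--Mumford regularity of the Orlik--Terao algebra is at most $n-1$ (from \cite{GST}), which for a Cohen--Macaulay standard graded algebra bounds the top degree of the Artinian reduction.

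For (ii), the ring $\mathcal G_{\mathcal A}$ is standard graded, so its embedding dimension is $\dim_k(\mathcal G_{\mathcal A})_1$. The ideal $Q_{\mathcal A}$ is generated by the circuit forms $\partial C$ of degree $|C|-1\geq 1$. Degree-one generators would come from circuits with $|C|=2$, that is, proportional $\ell_i$, which do not occur since the hyperplanes are distinct. Hence $(Q_{\mathcal A})_1=0$ and $(\mathcal G_{\mathcal A})_1=\mathfrak R_1/\langle\eta_1,\ldots,\eta_n\rangle_k$. The $\eta_j$ are linearly independent because their coefficient matrix $(a_{ji})$ is the $n\times m$ matrix of the forms $\ell_i$, which has rank $n$ as $\mathcal A$ has rank $n$. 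Therefore $\dim_k(\mathcal G_{\mathcal A})_1=m-n$. The embedding dimension of an Artinian local ring is $\dim\fn/\fn^2$, and here $\fn/\fn^2=(\mathcal G_{\mathcal A})_1$ precisely because there are no linear relations to quotient by beyond the $\eta_j$. This gives the stated count, interpreting $m-n$ as the dimension of the space of linear dependences among the $\ell_i$.

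The main obstacle is the degree-one vanishing $(Q_{\mathcal A})_1=0$, which requires the standing assumption that the $\ell_i$ are pairwise non-proportional. I would make this explicit: since $\mathcal A$ is a set of distinct hyperplanes, circuits have size at least $3$. The second delicate point is the identification $\mathcal F(I)_n=(\eta)\mathcal F(I)_{n-1}$, which I expect to be routine once one notes that $f_{x_j}=\sum_i a_{ji}L_i$ maps to $\eta_j$ modulo $Q_{\mathcal A}$.
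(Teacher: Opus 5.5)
Your proposal is correct and follows essentially the same route as the paper. For (i) it uses the linear system of parameters $\eta_1,\ldots,\eta_n$ on the Cohen--Macaulay fiber $\mathcal F(I)$ together with $\operatorname{red}_{J_f}(I)\leq n-1$; for (ii) it uses the rank-$n$ coefficient matrix making $\mathfrak R/(\eta_1,\ldots,\eta_n)$ a polynomial ring in $m-n$ variables, plus the absence of linear forms in $Q_{\mathcal A}$. You only make explicit details the paper leaves implicit: the translation $\mathcal F(I)_n=(\eta)\mathcal F(I)_{n-1}$, the passage from $\overline{k}$ to $k$, and that $(Q_{\mathcal A})_1=0$ because the hyperplanes are distinct.
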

		
		\begin{proof}
			 (i) By Theorem~\ref{thm:BST-reduction-conjecture} and its proof,  $\eta_1,\ldots,\eta_n$ give a linear system of parameters over $\mathcal F(I)$. The
			vanishing in degrees $q\geq n$ follows from
			$\operatorname{red}_{J_f}(I)\leq n-1$.

	       (ii)
			The coefficient matrix $(a_{ji})$ of the arrangement has rank $n$, because so does the arrangement. Therefore
			$\mathfrak{R}/(\eta_1,\ldots,\eta_n)$
			is a polynomial ring in $m-n$ variables. Since the Orlik--Terao ideal
			$Q_{\mathcal A}$ has no linear forms, the embedding dimension of
			$\mathfrak{R}/(Q_{\mathcal A},\eta_1,\ldots,\eta_n)$
			is exactly $m-n$.
		\end{proof}

		%%%%%%%%%%%%%%%%%%%%%%%%%%%%%%%%%%%%%%%%%%%%%%%%
		\section{The linear type conjecture}
		One collects a few well-known preliminaries.
		
		Quite generally, let $A$ be a Noetherian ring and let $J=(f_1,\ldots,f_m)\subseteq A$ be an ideal. The Rees
		algebra of $J$ is the $A$-algebra
		\[
		\Rees_A(J):=\bigoplus_{q\geq 0}J^q \simeq A[Jt]=A[f_1t, \ldots, f_mt]\subseteq A[t], \; t\; \text{a variable}.
		\]
		Consider the graded polynomial presentation
		\[
		S:=A[T_1,\ldots,T_m]\lar \Rees_A(J),\qquad T_i\longmapsto f_it,
		\]
		where $S$ is attributed the standard $T$-grading, with $\Q$ standing for its kernel, a homogeneous ideal in the $T$-grading. Denote by
		$\Q_1\subseteq \Q$ the subideal generated by the elements of $\Q$ of $T$-degree one.
	%	Then
	%	\[
	%	\sym_A(J)\simeq S/\Q_1,
	%	\]
	%	and the natural map $\sym_A(J)\surjects \Rees_A(J)$ has kernel $\Q/\Q_1$.
	%	Thus $J$ is of linear type if and only if $\Q=\Q_1$. 
	
	At the other end, let
		\[
		A^q\stackrel{\Phi}{\lar}A^m\lar J\lar 0
		\]
		be a free presentation of $J$ based on the stated generators, and set ${\bf T}:=[T_1\ \cdots\ T_m]$. Then, the defining ideal of a corresponding polynomial presentation of the symmetric algebra $\sym_A(J)$ is
		\[
		\mathcal L:={\bf T}\Phi=I_1({\bf T}\Phi)=\Q_1\subseteq S.
		\]
		Recall that $J$ is said to be an ideal of linear type if the  surjection $\sym_A(J)\surjects \Rees_A(J)$ that maps the symmetric power $\sym_A^q(J)$ to $J^q$ is also injective. 
		Thus, $J$ is of linear type if and only if  $\Q=\mathcal L$, for (any) choice of generators of $J$ as above.
		
		\begin{Remark}\label{saturation_obs}
		Suppose moreover that $A$ is local with maximal ideal $\fm$, and that  the  map
		$\sym_A(J)\to\Rees_A(J)$ is an isomorphism locally on	${\rm Spec}(A)\setminus\{\fm\}$. Since $\Rees_A(J)\subseteq A[t]$ is $A$-torsion-free, one gets
		\[
		\Q/\mathcal L=H^0_{\fm}(S/\mathcal L)
		=(\mathcal L:\fm^\infty)/\mathcal L,
		\]
		i.e., $\Q=\mathcal L:\fm^\infty$.
		
		This formulation will be used frequently below.
	\end{Remark}
		
		The next proposition will show that, for a four-generated ideal, a resolution of type $1,4,4,1$, along with the assumption of ``punctured'' linear type, force a complete Pfaffian presentation of the defining ideal of the Rees algebra.
		
		Parts of this approach have been established in \cite[Section 3.5]{Vasc} and \cite{JacDual1993}.
		
		\begin{Proposition}\label{thm:pfaffian-from-resolution}
			Let $R:=k[x,y,z,w]$ stand for a standard graded polynomial ring over a field $k$, and let
			$J\subseteq R$ denote an ideal generated by four homogeneous polynomials of the same degree.
			Let $\Q\subseteq S:=R[T_1,T_2,T_3,T_4]$ 
			%be the defining ideal of $\Rees_R(J)$, 
			and $\mathcal L=\Q_1$ be as above. Assume that$:$
			
		\begin{enumerate}
			\item[{\rm (a)}] Upon a choice of equigenerated generators of $J$ and base change of $R$, $R/J$ has a minimal graded free resolution of the form
				\[
				0\to R \xrightarrow{u} R^4 \xrightarrow{\Phi} R^4 \to R
				\to R/J\to 0,
				\]
				where $u:=(x,y,z,w)^t;$
				
			\item[{\rm (b)}] $J$ is locally of linear type on ${\rm Spec}(R)\setminus \{(x,y,z,w)\}$.
			\end{enumerate}
			Then, there exists a $4\times4$ alternating matrix $B$ over $S$ such that $\Q=(\mathcal{L},P)$, where 
			%$T\Phi=[x\ y\ z\ w]B$.
			$P:=\operatorname{pf}(B)$ denotes the Pfaffian of $B$. 
			%then
			%\[	\Q=\mathcal L+(P).	\]
			In particular, $J$ is not of linear type.
		\end{Proposition}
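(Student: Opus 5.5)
The argument follows the Simis--Ulrich--Vasconcelos construction of the Jacobian dual and a Pfaffian.

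\emph{Step 1: the Jacobian dual.} By (a), the resolution is self-dual up to shift: a Gorenstein codimension-three ideal has a symmetric resolution. After a change of bases we may therefore assume that the last map is $u=(x,y,z,w)^t$ and that $\Phi u=0$. The latter says that each column $\Phi_j$, read as a relation, is killed by $u$; more usefully, by duality $u^t\Phi^{\,t}$-type relations hold. Since the entries of $\mathbf T\Phi$ are linear in $T$ and polynomial in $x,y,z,w$, I would write
\[
\mathbf T\Phi=[x\ y\ z\ w]\,B
\]
with $B$ a $4\times 4$ matrix over $S$ (a Jacobian dual). I would then exploit $\Phi u=0$ to choose $B$ alternating. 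The key identity is $\mathbf T\Phi u=0$, i.e. $[x\ y\ z\ w]Bu=0$. Hence the quadratic form $u^tBu$ vanishes, so the symmetric part of $B$ lies in the Koszul relations on $x,y,z,w$ and can be absorbed. Concretely, replacing $B$ by $B-B'$ with $[x\ y\ z\ w]B'=0$ yields an alternating matrix. This normalization is the main technical step. I expect it to need care, and perhaps a degree argument, since the entries of $\Phi$ are homogeneous of the same degree by equigeneration and self-duality.

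\emph{Step 2: $P\in\Q$.} Let $P=\operatorname{pf}(B)$. For a $4\times4$ alternating $B$ there is the adjoint alternating matrix $B^\vee$ with $BB^\vee=P\cdot\mathrm{Id}$. Then
\[
[x\ y\ z\ w]\,P=\mathbf T\Phi B^\vee\in\mathcal L ,
\]
so $(x,y,z,w)P\subseteq\mathcal L$. By Remark~\ref{saturation_obs} and hypothesis (b), $\Q=\mathcal L:\fm^\infty$, hence $P\in\Q$. Moreover $P$ is bihomogeneous of $T$-degree $2$. To see $P\notin\mathcal L$, note that the $T$-degree-two part of $\mathcal L$ is generated by multiples of the entries of $\mathbf T\Phi$, which involve $\fm$-coefficients. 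I would compare $x$-degrees: $P$ has $x$-degree $2(\deg\Phi-1)$ and lies in the degree where $\mathcal L$ would require terms divisible by $\fm\Phi$-entries. Alternatively, setting $x=y=z=w=0$ kills $\mathcal L$ but not $P$ if the entries of $B$ have $R$-degree $0$, i.e. if $\Phi$ is linear. In general I would use minimality of the resolution to show $P\notin\mathcal L$, or at least that $P\neq0$ in $S/\mathcal L$. This is the second delicate point.

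\emph{Step 3: $\Q=(\mathcal L,P)$.} I would show that $(\mathcal L,P)$ is already saturated with respect to $\fm$. The ideal $I_1([x\ y\ z\ w]B)+(\operatorname{pf}B)$ with $B$ alternating $4\times 4$ is the ideal of submaximal Pfaffians of the $5\times5$ alternating matrix
\[
\begin{pmatrix} B & -u\\ u^t & 0\end{pmatrix}.
\]
If its grade is $3$, Buchsbaum--Eisenbud shows it is Gorenstein of codimension $3$, hence unmixed. The codimension count uses $\dim S=8$ and $\dim\Rees=5$, and (b) gives the right height away from $V(\fm S)$. Since $V(\fm S)$ has codimension $4>3$, the ideal $(\mathcal L,P)$ has no $\fm$-primary-type components. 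Therefore $(\mathcal L,P)=(\mathcal L,P):\fm^\infty=\Q$.

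Finally, because $P\notin\mathcal L$ by Step 2, $J$ is not of linear type. I expect the main obstacle to be the alternating normalization of $B$ in Step 1, together with verifying the grade-three condition for the Pfaffian ideal in Step 3.
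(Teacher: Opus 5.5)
Your route is the paper's own. You build an alternating Jacobian dual $B$ with $\mathbf T\Phi=[x\ y\ z\ w]B$ (the paper simply quotes this from Simis--Ulrich--Vasconcelos). You then get $\fm P\subseteq\mathcal L$ from the alternating adjoint, use $\mathcal Q=\mathcal L:\fm^\infty$ from (b), and apply Buchsbaum--Eisenbud to the $5\times5$ matrix, with the height-three count and unmixedness giving $\mathcal Q\subseteq(\mathcal L,P)$.

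\textbf{The gap.} You never establish $P\notin\mathcal L$, which is exactly what ``$J$ is not of linear type'' requires. Your Step~2 ideas do not settle it:
\begin{itemize}
\item Specializing $x=y=z=w=0$ kills $P$ as soon as $e:=\deg\Phi\geq 2$; in Example~\ref{main-ex} the entries of $\Phi$ are cubics.
\item A bidegree comparison cannot decide it either. For $e\geq2$ the component $\mathcal L_{(2e-2,2)}=\sum_j S_{(e-2,1)}(\mathbf T\Phi)_j$, which is where $P$ lives, is nonzero.
\end{itemize}
Yet your final sentence cites Step~2 for this fact.

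\textbf{The fix} is the minimality of the Buchsbaum--Eisenbud resolution, already within reach in your Step~3. Choose $B$ bihomogeneous of bidegree $(e-1,1)$. Its entries then have $T$-degree one, so every entry of the $5\times5$ alternating matrix lies in $\fn$. Once grade $3$ is established, the complex $0\to S\to S^5\to S^5\to S$ is therefore a minimal free resolution of $S/K$. Hence $K=(\mathcal L,P)$ needs five generators. If $P\in\mathcal L$, then $K=\mathcal L$ would be generated by the four entries of $\mathbf T\Phi$, a contradiction. This is how the paper obtains $P\notin\mathcal L$.

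\textbf{Two smaller corrections.}
\begin{itemize}
\item Drop the self-duality claim. $J$ is not Gorenstein of codimension three: it has four minimal generators, an even number, and in the application $\operatorname{ht}J_f=2$ and $\operatorname{depth}R/J_f=1$. You never need self-duality, since $\Phi u=0$ is just the complex condition in (a).
\item In Step~1, ``the symmetric part of $B$ lies in the Koszul relations'' is not literally true: $u^tB_su=0$ does not force $u^tB_s=0$. The clean argument is that $(\mathbf T\Phi)u=0$, together with exactness of the Koszul complex on the $S$-regular sequence $x,y,z,w$, gives $(\mathbf T\Phi)^t=B^tu$ with $B$ alternating directly. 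This needs no characteristic assumption.
\end{itemize}
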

		\begin{proof} 
			
		{\bf Claim.} 
		With the assumption of item (a), there exists a $4\times4$ alternating matrix $B$ over $S$ such that ${\bf T}\Phi=[x\ y\ z\ w]B$, and, moreover, the entries of $B$ belong to the maximal ideal $\fn:=(x,y,z,w,T_1,T_2,T_3,T_4)$ of $S$.
		
		For this, one may assume that $R=k[x,y,z,w]_{(x,y,z,w)}$.
		The main assertion is proved in \cite[Proposition 2.4]{JacDual1993}.
		For the additional assertion, otherwise some entry of the matrix $[x\ y\ z\ w]B$ would have at least one term of the form $ay$, with $a\in k\setminus \{0\}$ and $y\in\{x,y,z,w\}$.
			However, no such term can be a term of an entry of the matrix ${\bf T}\Phi$ because the entries of $\Phi$ are homogeneous of degree $\geq 1$.
		
		So much for the above claim.

			Now, let $\mathrm{adj}(B)$ be the adjugate of $B$, so that $B\mathrm{adj}(B)=\operatorname{pf}(B)\operatorname{Id}_4=P\operatorname{Id}_4$. Since $[x\ y\ z\ w]B=0$ in
			$S/\mathcal L$, multiplying by $\mathrm{adj}(B)$ gives
			$P[x\ y\ z\ w]=0$  in  $S/\mathcal L$.
			Thus $\fm P\subseteq\mathcal L$. 
		%	Set $K=\mathcal L+(P)$. 
			Then
			\[K:=
			(\mathcal{L},P)\subseteq \mathcal L:\fm\subseteq \mathcal L:\fm^\infty=\Q,
			\]
			where the last most equality follow by item (b) and Remark~\ref{saturation_obs}.
			
			We now prove the reverse inclusion, for which one may assume that $S$ is local with unique maximal ideal $\fn$. Consider the $5\times5$ alternating matrix
			\begin{equation}\label{skew-symmetric}
			M=
			\begin{pmatrix}
				0&x&y&z&w\\
				-x&0&a&b&c\\
				-y&-a&0&d&e\\
				-z&-b&-d&0&h\\
				-w&-c&-e&-h&0
			\end{pmatrix},\; (a,b,c,d,e,h\in S),
			\end{equation}
			of which $B$ is assumed to be the submatrix by removing the first column and row.
			Thus, its submaximal Pfaffians are, up to entry interchange in $B$ and signs, the four entries of
			$[x\ y\ z\ w]B$ along with $P$. Hence, $K$ is
			the ideal of submaximal Pfaffians of $M$.

		{\bf Claim.} $\hht K=3$. 
		
		Since $K\subseteq\Q$, then
		$\hht K \leq\hht\Q= 3$. Let $\fp$ be a minimal prime of $K$. If $\fp\supseteq\fm S$, then $\hht\fp\geq4$. If $\fp\not\supseteq\fm S$, then localizing at $\fp$ gives $\Q_\fp=\mathcal L_\fp$, because $\Q/\mathcal L$ is supported at $\fm$.
			Since $K_\fp\subseteq\Q_\fp$ and $K_\fp$ is proper, it follows that
			$\Q\subseteq\fp$, and therefore $\hht\fp\geq3$. 
	Thus every minimal prime of	$K$ has height at least $3$, and so $\hht K=3$.
			
			As observed above, the entries of $M$ belong to the maximal ideal of $S$.
			Then, by \cite[Theorem~2.1]{BE77}, one knows that the ideal of submaximal Pfaffians of $M$ is a perfect Gorenstein ideal of height three, and -- since the entries of $M$ lie in $\fn \subseteq S$ --  form a minimal generating set. In particular,
			$P\notin\mathcal L$.
			
			Moreover, $S/K$ is Cohen--Macaulay and all its associated primes have height
			$3$. No associated prime of $S/K$ contains $\fm S$, because every prime
			containing $\fm S$ has height at least $4$. Hence
			$H^0_{\fm}(S/K)=0$, equivalently $K:\fm^\infty=K$.
			Now $\mathcal L\subseteq K$, so
			$\mathcal L:\fm^\infty\subseteq K:\fm^\infty=K$. Therefore,
			\[
			\Q=\mathcal L:\fm^\infty\subseteq K,
			\]
		as was to be shown.
		\end{proof}

		The intent is to apply the above proposition in the case where $J$ is the gradient ideal of a form in $R=k[x,y,z,w]$.
		
		The assumption of item (a) is quite particular. Perhaps not so much the assumption of item (b), which one now envisages in the case of the gradient of the defining form of a central arrangement.

		\begin{Proposition}[char$(k)=0$]
			\label{prop:rank-four-Rees-saturation}
			 Let
			$f=\ell_1\cdots \ell_m\in R=k[x,y,z,w]$ be the defining polynomial of a central arrangement. Then the gradient ideal $J_f$ is locally of linear type on ${\rm Spec}(R)\setminus \{(x,y,z,w)\}$. 
		\end{Proposition}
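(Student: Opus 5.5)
The plan is to reduce, prime by prime, to the rank $\leq 3$ case, where \cite{BST-almost} already proves that the Jacobian ideal is of linear type. Linear type is a local property: the kernel of $\sym_R(J_f)\surjects\Rees_R(J_f)$ commutes with localization. It therefore suffices to show that $(J_f)_\fp$ is of linear type for every prime $\fp\neq\fm:=(x,y,z,w)$. Since everything is graded, one may even restrict to homogeneous primes $\fp\subsetneq\fm$, but this is not needed.

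\textbf{Step 1: reduce to the subarrangement through $\fp$.} Fix such a prime $\fp$. Split the defining forms into those lying in $\fp$ and those not in $\fp$. Set $g:=\prod_{\ell_i\in\fp}\ell_i$ and $h:=\prod_{\ell_i\notin\fp}\ell_i$, so that $f=gh$ and $h$ is a unit in $R_\fp$. Because $\operatorname{char}k=0$, the Euler relation gives $f\in J_f$ and $g\in J_g$. Hence $J_f=(f,f_x,f_y,f_z,f_w)$ and $J_g=(g,g_x,\ldots,g_w)$. From $f_{x_j}=hg_{x_j}+gh_{x_j}$ and the invertibility of $h$ at $\fp$ we get
\[
(J_f)_\fp=(gh,\,hg_{x_j}+gh_{x_j})R_\fp=(g,\,g_{x_j})R_\fp=(J_g)_\fp .
\]
If no $\ell_i$ lies in $\fp$, then $g=1$ and $(J_f)_\fp=R_\fp$, which is trivially of linear type.

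\textbf{Step 2: the subarrangement has rank at most $3$.} The linear forms $\ell_i\in\fp$ span a $k$-subspace $V\subseteq R_1$. We have $\dim V\leq 3$, since otherwise $V=R_1$ and $\fm\subseteq\fp$. Choose a basis $y_1,\ldots,y_4$ of $R_1$ whose first $r:=\dim V$ elements span $V$. Then $g\in R':=k[y_1,\ldots,y_r]$ defines a central arrangement of rank $r\leq3$ in $R'$; it automatically has at least $r$ hyperplanes. The Jacobian ideal of $g$ computed in $R$ is the extension $J'R$ of its Jacobian ideal $J'\subseteq R'$: the partials in $y_{r+1},\ldots,y_4$ vanish, and the chain rule relates the two coordinate systems by an invertible linear change.

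\textbf{Step 3: transfer linear type.} By \cite{BST-almost}, $J'$ is of linear type in $R'$. The extension $R'\to R=R'[y_{r+1},\ldots,y_4]$ is flat, and both $\sym$ and $\Rees$ commute with flat base change. Hence $J'R=J_g$ is of linear type, and so is its localization $(J_g)_\fp=(J_f)_\fp$.

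The main obstacle is Step 1, namely identifying $(J_f)_\fp$ with $(J_g)_\fp$. The Euler-identity trick is what makes this work, and it is exactly where $\operatorname{char}k=0$ (or $\operatorname{char}k\nmid\deg$) enters. A secondary point to double-check is that the cited rank $\leq3$ result of \cite{BST-almost} covers every case that can occur for $g$, including $r\leq2$ and boolean subarrangements with exactly $r$ hyperplanes. If it does not cover some of these, they would be handled directly: for example, $J_g$ is then generated by a regular sequence or by the maximal minors of a generic-type matrix, and such ideals are of linear type.
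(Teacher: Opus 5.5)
Your proof is correct and follows the paper's argument essentially step for step. You localize at $\fp\neq\fm$, use the Euler relation (char $0$) and the invertibility of $h=\prod_{\ell_i\notin\fp}\ell_i$ in $R_\fp$ to get $(J_f)_\fp=(J_g)_\fp$, observe that the forms lying in $\fp$ span a proper subspace of $R_1$, and then invoke the rank $\leq 3$ result of \cite{BST-almost}; the paper likewise treats ranks $1,2$ as elementary. Your Step 3, the coordinate change plus flat base change from $k[y_1,\ldots,y_r]$ to $R$, spells out a point the paper leaves implicit when it applies the rank $\leq 3$ result to a non-essential arrangement in four variables.
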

		
		\begin{proof}
			The gradient ideal of a central arrangement of rank at most $3$ is of linear
			type: ranks $1$ and $2$ are elementary, and rank $3$ follows from
			\cite[Proposition~2.14]{BST-almost}.
			Thus, it suffices to prove that, locally on the ``punctured spectrum'', $J_f$ coincides with the gradient ideal of an arrangement of rank at most $3$.
			
			To proceed, let $\fp\neq\fm$ be any non-maximal prime. Set
			\[
			g_\fp=\prod_{\ell_i\in\fp}\ell_i,
			\]
			with the convention that $g_\fp=1$ if no $\ell_i$ belongs to $\fp$. Write
			$f=ug_\fp$, where $u=\prod_{\ell_i\notin\fp}\ell_i$ is a unit in $R_\fp$.
			If $g_\fp=1$, then $f$ is a unit in $R_\fp$. Since $f\in J_f$, we get
			$(J_f)_\fp=R_\fp$, hence $(J_f)_\fp$ is of linear type. Assume now that
			$g_\fp\neq 1$. For every $v\in\{x,y,z,w\}$,
			\[
			f_v=u(g_\fp)_v+g_\fp u_v.
			\]
			Euler's formula gives $g_\fp\in J_{g_\fp}$, hence
			$(J_f)_\fp\subseteq (J_{g_\fp})_\fp$. Conversely, since $f\in J_f$ and
			$f=ug_\fp$ with $u$ a unit, we have $g_\fp\in (J_f)_\fp$. Thus
			\[
			u(g_\fp)_v=f_v-g_\fp u_v\in (J_f)_\fp,
			\]
			and therefore $(g_\fp)_v\in (J_f)_\fp$. Hence
			\[
			(J_f)_\fp=(J_{g_\fp})_\fp.
			\]
			Since $\fp\neq\fm$, the linear forms contained in $\fp$ span a proper subspace
			of $R_1$. Thus $g_\fp$ defines a central arrangement of rank at most $3$, as required.
	\end{proof}	
%%%%%%%%%%%%%%%%%%%%%%%%%%%%%%%%%%%%			
%			Now
%			\[
%			\Q/\mathcal L =\ker\big(\sym_R(J_f) \longrightarrow \Rees_R(J_f)\big).
%			\]
%			By the first part, this kernel is supported at $\fm$. Moreover, away from $\fm$
%			we have
%			\[
%			\sym_R(J_f)_\fp\simeq \Rees_R(J_f)_\fp\subseteq R_\fp[t],
%			\]
%			so $\sym_R(J_f)$ is $R$-torsion-free locally on
%			$\spec{R}\setminus\{\fm\}$. By Simis's torsion criterion
%			\cite[Lemma~5.2]{Simis-ELAM}, the $R$-torsion of $\sym_R(J_f)$ is $0:_{\sym_R(J_f)}\fm^\infty$. Since $\Rees_R(J_f)\subseteq R[t]$ is $R$-torsion-free, the kernel
%			$\Q/\mathcal L$ is exactly this torsion submodule. Therefore
%			\[
%			\Q/\mathcal L
%			=
%			0:_{\sym_R(J_f)}\fm^\infty
%			=
%			(\mathcal L:\fm^\infty)/\mathcal L,
%			\]
%			and hence $\Q=\mathcal L:\fm^\infty$. 
%%%%%%%%%%%%%%%%%%%%%%%%%%%%%%%%%%%%
		
One next gives an example of a central arrangement of maximal rank in $R=k[x,y,z,w]$ satisfying the assumptions of Proposition~\ref{thm:pfaffian-from-resolution} with $J=J_f$ the gradient ideal of the defining polynomial $f$ of the arrangement.

\begin{Example}\label{main-ex}
$f:=xyzw(w+y)(w+z)(w-x+y)(w-x+z)(w-x+y+z).$
\end{Example}
By Proposition~\ref{prop:rank-four-Rees-saturation}, it suffices to prove the assumption of item (a).
For this one resorts to a computation in~\cite{Singular} yielding that $R/J_f$ has a minimal graded free resolution of
the form
\[
0\to R(-12)\xrightarrow{u}R(-11)^4\xrightarrow{\Phi}R(-8)^4
\xrightarrow{[f_x\ f_y\ f_z\ f_w]}R\to R/J_f\to 0,
\]
where, upon choice of bases, one may assume that $u=(x,y,z,w)^t$.

Note that the entries of $\Phi$ are forms of degree $3$ in $R$, hence every nonzero entry of the skew-symmetric matrix $B$ has bidegree $(2,1)$ in $S$.
Therefore, with the notation of $B$ in (\ref{skew-symmetric}), its Pfaffian $P=(ah-be+cd)$ has bidegree $(2,1)+(2,1)=(4,2)$ in $S$.
In particular, it is an obstruction to $J_f$ being syzygetic in the terminology of \cite{SV-conormal}. 
In the subsequent section one will expand on the combinatorial details of this example.

		The above example persists upon multiplying by Boolean factors in
		new variables.
		One presents this as a theorem stating the full counter-part disproving Conjecture~\ref{conj:BST}.
		
		\begin{Theorem}\label{thm:counterexamples-all-ranks}
			For every $n\geq 4$, there exists   a central hyperplane arrangement in the affine space $\AA_k^n$ whose gradient ideal is not of linear type.
		\end{Theorem}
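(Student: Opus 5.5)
For $n=4$ the plan is to take Example~\ref{main-ex}: the computed resolution gives item (a) of Proposition~\ref{thm:pfaffian-from-resolution}, and Proposition~\ref{prop:rank-four-Rees-saturation} gives item (b). Hence $J_f$ is not of linear type.

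For $n>4$, let $R'=k[x,y,z,w,x_5,\ldots,x_n]$ and put
\[
F:=f\cdot x_5\cdots x_n ,
\]
which defines a central arrangement of rank $n$ in $\AA_k^n$. Write $g:=x_5\cdots x_n$. Then
\[
F_v=g f_v \ \text{ for } v\in\{x,y,z,w\}, \qquad F_{x_j}=f\,g/x_j \ \text{ for } j\ge 5 .
\]
The goal is to show that if $J_F$ were of linear type, then so would be $J_f$. I would specialize $x_5=\cdots=x_n=1$, which should be cleaner than localizing. Let $\pi:R'\to R$ be this specialization. Then
\[
\pi(J_F)=(f_x,f_y,f_z,f_w,f)=J_f ,
\]
by Euler's formula, since $\operatorname{char}k=0$.

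However, linear type need not pass to quotients in general, so I would instead localize at the multiplicative set generated by $x_5,\ldots,x_n$. Over $R'_g$ one has
\[
J_FR'_g=(f_x,f_y,f_z,f_w,f)R'_g=J_fR'_g .
\]
Here $J_fR'_g$ is the extension of $J_f$ along the flat map $R\to R'_g=R[x_5^{\pm1},\ldots,x_n^{\pm1}]$. Linear type localizes, so $J_FR'_g$ is of linear type. Symmetric and Rees algebras commute with flat base change, so $J_fR'_g$ is of linear type exactly when $J_f$ is, since $R\to R'_g$ is faithfully flat (a Laurent polynomial extension). One point to check is that the symmetric algebra does not depend on the chosen generating set; we may use the five generators including $f$, since $f\in J_f$ already. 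Faithful flatness then shows that the kernel of $\sym_R(J_f)\to\Rees_R(J_f)$ vanishes, a contradiction.

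The main obstacle is the base case $n=4$. Its hypothesis (a) relies on the computer calculation of the resolution and on the base change making $u=(x,y,z,w)^t$, both of which the paper asserts. The remaining care is in the descent step: writing $\Rees(J)\otimes R'_g\cong\Rees(JR'_g)$ and the same for $\sym$, and checking that the natural maps are compatible, so that injectivity descends by faithful flatness.
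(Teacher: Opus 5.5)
Your proposal is correct and follows the paper's argument. It uses the same base case (Example~\ref{main-ex}, with Propositions~\ref{thm:pfaffian-from-resolution} and~\ref{prop:rank-four-Rees-saturation}), the same extension $f\cdot x_5\cdots x_n$, and the same localize-then-descend-by-faithful-flatness step. The only cosmetic difference is that you invert $x_5,\ldots,x_n$, so that $R\to R'_g$ is a free Laurent extension, whereas the paper localizes at the prime $(x,y,z,w)R_s$; this lets you descend globally rather than at the maximal ideal $(x,y,z,w)$.
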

		\begin{proof}
			The case $n=4$ is Example~\ref{main-ex}. Let
			$n=4+s$ with $s\geq 1$, and set
			\[
			R_s=k[x,y,z,w,u_1,\ldots,u_s],
			\qquad
			f_s=fu_1\cdots u_s,
			\]
			where $f$ is as in the above example.
			Then $f_s$ defines a central arrangement of rank $4+s$. Let
			$\fp=(x,y,z,w)R_s\subseteq R_s$. Since no $u_i$ belongs to $\fp$, it must be a unit in $(R_s)_\fp$. Hence $(J_{f_s})_\fp=J_f(R_s)_\fp$. 
			Indeed, for the old variables $v\in\{x,y,z,w\}$ one has
			\[
			(f_s)_v=(u_1\cdots u_s)f_v,
			\]
			while for the new ones,
			\[
			(f_s)_{u_i}=f u_1\cdots \widehat{u_i}\cdots u_s.
			\]
			Since $f$ is homogeneous, Euler's formula gives $f\in J_f$, and the equality
			follows.
			Assume, by contradiction, that $J_{f_s}$ is of linear type. Then $(J_{f_s})_\fp$ is of linear type. Hence $J_f(R_s)_\fp$ is of linear type.
			But the map
			\[
			k[x,y,z,w]_{(x,y,z,w)}\lar (R_s)_\fp
			\]
			is faithfully flat. Since the linear type condition descends by faithfully flat base
			change, $J_f$ would be of linear type at $(x,y,z,w)$, a contradiction. 
			Therefore $J_{f_s}$ is not of
			linear type either.
		\end{proof}
		
		%%%%%%%%%%%%%%%%%%%%%%%%%%%%%%%%%%%%%%%%%%%%%%%
		%%%%%%%%%%%%%%%%%%%%%%%%%%%%%%%%%%%%%%%%%%	
		\section{Around the combinatorics of the Pfaffian obstruction}
		\label{sec:combinatorics-pfaffian-obstruction}
		
		One now explains how the rank-four counterexample is reflected in the
		intersection lattice of the arrangement. The point being that the Pfaffian
		equation of Example~\ref{main-ex} is not an isolated
		homological accident: it appears in the presence of two combinatorial
		phenomena, namely a nontrivial space of linear relations among the defining
		forms and nonordinary codimension two flats.
		
		One briefly recall some intervening combinatorial notions, for which the overall reference is \cite{OrlikTerao}.
		
		Let $\mathcal A=\{H_i={\mathcal{V}}(\ell_i)\}_{i=1}^m\subseteq k^n$ be a central arrangement of rank
		$n$ over $k$, with defining polynomial $f:=\ell_1\cdots\ell_m \in R:=k[x_1,\ldots,x_n]$. Let
		$L(\mathcal A)$ be the  lattice of intersections of $H_i$'s, ordered by reverse inclusion, an element of which is called a \emph{flat}. 
		Given a flat
		$X\in L(\mathcal A)$, one sets $\rk(X):=\codim(X)$, as a $k$-vector subspace.
		Of particular interest here is the sub-lattice
		\[
		L_2(\mathcal A)=\{X\in L(\mathcal A)\mid \rk(X)=2\}.
		\]
		For $X\in L_2(\mathcal A)$,  denote
		\[
		S_X:=\{1\leq i\leq m\mid X\subseteq H_i\},\qquad m_X:=|S_X|,
		\qquad P_X:=(\ell_i\mid i\in S_X).
		\]
		Thus, $m_X$ is the number of hyperplanes of $\mathcal A$ passing through the codimension two
		flat $X$, while $P_X$ is the prime ideal generated by the corresponding linear forms. Introduce the following additional definitions:
		\[
		\epsilon(\mathcal A):=m-n,\qquad
		\mu_2(\mathcal A):=\max_{X\in L_2(\mathcal A)}m_X,\qquad
		\nu_3(\mathcal A):=\sum_{X\in L_2(\mathcal A)}\binom{m_X}{3}.
		\]
		The bearing of these elements is as follows: $\epsilon(\mathcal A)$ is the $k$-vector dimension of the kernel of the map defined by the $n\times m$ coefficient matrix
		$(a_{ji})$ of the defining forms
		$\ell_i=\sum_j a_{ji}x_j$, i.e., 
		the $k$-vector dimension of the space of linear relations
		among the defining forms; while $\mu_2(\mathcal A)$ and $\nu_3(\mathcal A)$
		measure each the codimension two ``fatness'' of the arrangement. 
		
		The notion of a circuit of $\mathcal A$ has been recorded earlier. 
		%is a minimally $k$-linearly %\emph{dependent} subset $C:=\{l_{i_1}, %\ldots, \ell_{i_c}\}$ of $\{\ell_1,\ldots,\ell_m\}$, i.e., every %proper subset of $C$ is $k$-linearly %\emph{independent}. 
		One denotes by $g(\mathcal A)$  the smallest cardinality of a circuit. 
		%Equivalently, $g(\mathcal A)$ is the smallest number of defining forms of $\mathcal A$ which are linearly dependent. 
		If there is no circuit, one sets $g(\mathcal A)=\infty$.
		
		As in previous sections, let $L_i:=f/\ell_i$, for $1\leq i\leq m$, and 
		$ I:=(L_1,\ldots,L_m)$, the corresponding ideal of $(m-1)$-fold products. 
		
		The following proposition shows that $I$ is essentially controlled in terms of codimension two flat data:
		
		\begin{Proposition}\label{prop:reciprocal-fat-flats}
			With the above notation, one has$:$
			
		%	\begin{itemize}
			%	\item[{\rm (1)}] 
			$\bullet$ \emph{Decomposition}$:$
				$
				I=\bigcap_{X\in L_2(\mathcal A)}P_X^{m_X-1}.
				$
			%	\item[{\rm (2)}] 
			
			$\bullet$	\emph{Degree}$:$
				\[
				\deg(R/I)=\sum_{X\in L_2(\mathcal A)}\binom{m_X}{2}=\binom{m}{2}.
				\]
		%	\item[{\rm (3)}]
			
			$\bullet$ The following conditions are equivalent:
	%	\end{itemize}
			\begin{itemize}
				 \item[{\rm (a)}] $I$ is generically a complete intersection.
				\item[{\rm (b)}] $m_X=2$ for every $X\in L_2(\mathcal A).$
				\item[{\rm (c)}] $\mu_2(\mathcal A)=2.$
				\item[{\rm (d)}] $g(\mathcal A)\geq 4$.
			\end{itemize}
		\end{Proposition}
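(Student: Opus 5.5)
The plan is to prove the three parts in order. The decomposition is the key step, and the other two parts follow from it by localization.

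\emph{Decomposition.} First I would show that $I$ is a perfect ideal of codimension two. The relations $\ell_iL_i-\ell_{i+1}L_{i+1}=0$, for $1\leq i\leq m-1$, give an $m\times(m-1)$ matrix whose maximal minors are, up to sign, the $L_i$. Since the hyperplanes are distinct, any two $\ell_i$ are independent, so $\hht I=2$. The Hilbert--Burch theorem then says $I$ is perfect of height $2$, hence unmixed, and every associated prime of $R/I$ has height $2$.

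Next, let $\fp$ be such an associated prime. If $\fp$ contained at most one form $\ell_j$, then the product $L_j$ of the remaining forms would be a unit in $R_\fp$, which is impossible. So $\fp$ contains two independent linear forms, and since $\hht\fp=2$ it equals the ideal they generate. That ideal is exactly $P_X$ for the rank-two flat $X$ they cut out.

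It remains to identify the $P_X$-primary component. Localizing at $P_X$ inverts every $\ell_i$ with $i\notin S_X$, so $I_{P_X}$ is generated by the $(m_X-1)$-fold products of the $m_X$ pairwise independent forms $\{\ell_i\}_{i\in S_X}$. These forms live in a two-dimensional space of linear forms. I would show that their $m_X$ products of degree $m_X-1$ are $k$-linearly independent, by evaluating at the zeros of the $\ell_i$ in the style of Lagrange interpolation. Since the space of degree $m_X-1$ forms in two variables has dimension $m_X$, these products span all of $P_X^{m_X-1}$. Hence the $P_X$-component is $P_X^{m_X-1}$, and unmixedness gives $I=\bigcap_X P_X^{m_X-1}$. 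I expect this step to be the main obstacle, namely the linear independence of the products together with checking the Hilbert--Burch setup.

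\emph{Degree.} By the associativity formula, $\deg(R/I)=\sum_X \ell\bigl(R_{P_X}/P_X^{m_X-1}R_{P_X}\bigr)$. In a two-dimensional regular local ring, the colength of the $r$-th power of the maximal ideal is $\binom{r+1}{2}$, so each term equals $\binom{m_X}{2}$. The total is $\binom{m}{2}$ because every pair $\{i,j\}$ determines exactly one rank-two flat, namely $H_i\cap H_j$, so the pairs of hyperplanes are partitioned by the flats $X$.

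\emph{Equivalences.} The ideal $I$ is generically a complete intersection if and only if each localization $P_X^{m_X-1}R_{P_X}$ is a complete intersection. In a two-dimensional regular local ring, a power $\mathfrak n^{r}$ is two-generated only when $r\leq1$, so this happens exactly when $m_X=2$ for all $X$; this gives (a)$\Leftrightarrow$(b). Since $m_X\geq2$ always, (b)$\Leftrightarrow$(c) is immediate. For (b)$\Leftrightarrow$(d), note that circuits of size at most $2$ do not exist because the forms are pairwise independent. A $3$-circuit is then exactly a triple of forms lying in the same two-dimensional span, that is, three hyperplanes through a common rank-two flat $X$, which forces $m_X\geq3$. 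Conversely, any flat with $m_X\geq 3$ yields such a triple. Hence $g(\mathcal A)\geq4$ if and only if $m_X=2$ for every $X\in L_2(\mathcal A)$.
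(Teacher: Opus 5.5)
Your proposal is correct and follows the same route as the paper. You localize at each $P_X$ to get $I_{P_X}=P_X^{m_X-1}R_{P_X}$ and use unmixedness of $I$ to assemble the decomposition, then apply the associativity formula with the pair count, the fact that $\mathfrak n^{r}$ is a complete intersection only for $r=1$, and the identification of $3$-circuits with flats having $m_X\geq 3$; beyond this, you justify points the paper only asserts, namely unmixedness via Hilbert--Burch on the bidiagonal relation matrix, that the minimal primes are exactly the $P_X$, and the local equality via Lagrange-interpolation independence of the $(m_X-1)$-fold products.
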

		
		\begin{proof}
			The decomposition is the standard decomposition of the ideal of
			$(m-1)$-fold products -- see~\cite[Proposition~2.2]{AGT}. One recasts the argument for the reader's convenience.
			
			 Let $X\in L_2(\mathcal A)$. Since $\codim X=2$, the ideal
			\[
			P_X=(\ell_i\mid X\subseteq H_i)
			\]
			is the defining prime ideal of the $k$-subspace $X$, in particular $\hht(P_X)=2$.
			Upon localizing at $P_X$, the forms $\ell_i$ with $i\notin S_X$ become units.
			Therefore,
			\[
			I_{P_X}=
			\left(
			\prod_{\substack{j\in S_X\\ j\neq i}}\ell_j
			\ \middle|\ i\in S_X
			\right)R_{P_X}.
			\]
			Since $R_{P_X}$ is regular local ring of dimension $2$, with maximal ideal
			$P_XR_{P_X}$, it follows that
		%	. If $m_X=r$, then the forms $\ell_i$, $i\in S_X$, are distinct
		%	linear forms spanning $P_XR_{P_X}$.
		 the ideal generated by the products
			of one less defining form is $P_X^{m_X-1}R_{P_X}$.
			Thus $I_{P_X}=P_X^{m_X-1}R_{P_X}$.
			Since $I$ is unmixed of height $2$, the claimed decomposition follows.
			
		To compute the degree, since $I$ is unmixed of height $2$, the associativity formula for multiplicities gives
			\[
			\deg(R/I)=
			\sum_{X\in L_2(\mathcal A)}
			\lambda (R_{P_X}/I_{P_X})\,\deg(R/P_X).
			\]
			Clearly $\deg(R/P_X)=1$, as $P_X$ is generated by $k$-linear forms.
			Moreover,
			\[
			(R/I)_{P_X}\simeq R_{P_X}/P_X^{m_X-1}R_{P_X}.
			\]
			If $\mathfrak n$ is the maximal ideal of a regular local ring $\mathfrak N$ of dimension $2$
			then, for any $s\geq 1$,
			\[
			\lambda(\mathfrak N/\mathfrak n^s)=1+2+\cdots+s=\binom{s+1}{2}.
			\]
			With $\mathfrak N=R_{P_X}$ and $s=m_X-1$ it gives
			\[
			\lambda \bigl(R_{P_X}/P_X^{m_X-1}R_{P_X}\bigr)
			=
			\binom{m_X}{2}.
			\]
			Therefore
			\[
			\deg(R/I)=\sum_{X\in L_2(\mathcal A)}\binom{m_X}{2}.
			\]
			This establishes the first equality.
			For the second one, note that any pair of hyperplanes determines a unique codimension two flat,
			and any flat $X$ containing $m_X$ hyperplanes accounts for exactly
			$\binom{m_X}{2}$ such pairs. Hence
			\[
			\sum_{X\in L_2(\mathcal A)}\binom{m_X}{2}=\binom{m}{2},
			\]
			as claimed.

			It remains to prove the stated equivalences. By the decomposition, $I$ is generically
			a complete intersection if and only if $P_X^{m_X-1}R_{P_X}$ is a complete
			intersection for every $X\in L_2(\mathcal A)$. In a two-dimensional regular
			local ring, the power $\mathfrak n^s$ is a complete intersection if and only if
			$s=1$. Thus this happens if and only if $m_X-1=1$, that is, $m_X=2$, for every
			$X$. This is equivalent to having $\mu_2(\mathcal A)=2$.
			
			Finally, if $m_X\geq3$ for some codimension two flat $X$, then at least three
			defining forms become $k$-linearly dependent on the same two-dimensional span.
			Equivalently, $\mathcal A$ has a $3$-circuit. Hence, the assumption that $m_X=2$ for every
			$X\in L_2(\mathcal A)$ is equivalent to the absence of $3$-circuits, i.e. to having
			$g(\mathcal A)\geq4$.
			\end{proof}

		In Corollary~\ref{cor:OT-artinian-reduction} one introduced the Artinian Orlik--Terao reduction
		\[
		\mathcal G_{\mathcal A}
		=
		\mathcal F(I)/J_f\mathcal F(I)
		\simeq
		\mathfrak{R}/(Q_{\mathcal A},\eta_1,\ldots,\eta_n),
		\]
		where $\mathfrak{R}=k[T]:=k[T_1,\ldots,T_m]$, $Q_{\mathcal A}$ is the defining ideal of the Orlik--Terao algebra $\mathcal F(I)$, and
		$\eta_j=\sum_{i=1}^m a_{ji}T_i\, (1\leq j\leq n)$ its ${T}$-linear part. It came along with the assertion that its embedding dimension is $m-n=\epsilon(\mathcal A)$.
		One next explain how this intertwines with the circuit length bound $g(\mathcal{A})$.
		
		As usual, $\mathfrak{A}_d$ stands for the $k$-span of degree $d$ of a graded $k$-algebra $\mathfrak{A}$.
%By Corollary~\ref{cor:excess-embedding-dimension},  one has
%		\[\operatorname{embdim}\mathcal G_{\mathcal A}=m-n=\epsilon(\mathcal A).	\]
	%	Thus the number $m-n$ is not merely numerical: it is the number of independent
	%	linear parameters remaining after the Jacobian reduction.
		
		\begin{Proposition}\label{prop:OT-reduction-low-degrees}
			With the above notation and convention, one has$:$
			\begin{enumerate}
				\item[{\rm(i)}]
			For every $d\leq g(\mathcal A)-2$,
			\[
			\dim_k(\mathcal G_{\mathcal A})_d
			=
			\binom{\epsilon(\mathcal A)+d-1}{d}.
			\]
			\item[{\rm (ii)}] 
			If $q_2(\mathcal A)$ is the number of $k$-independent quadratic equations in the defining ideal of ${\mathcal G}_{\mathcal A}$, then
			\[
			\dim_k(\mathcal G_{\mathcal A})_2
			=
			\binom{m-n+1}{2}-q_2(\mathcal A),
			\qquad
			0\leq q_2(\mathcal A)\leq \nu_3(\mathcal A).
			\]
			\item[{\rm (iii)}]
			If $g(\mathcal A)\geq 4$, then
			\[
			\dim_k(\mathcal G_{\mathcal A})_2=\binom{m-n+1}{2}.
			\]
			In particular, there are no quadratic Orlik--Terao relations.
		\end{enumerate}
		\end{Proposition}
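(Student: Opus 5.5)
The whole argument rests on one fact: the Orlik--Terao ideal $Q_{\mathcal A}$ has no nonzero elements in degrees below $g(\mathcal A)-1$. By the description recalled in Section~\ref{sec:BST-reduction-conjecture}, $Q_{\mathcal A}$ is generated by the circuit relations $\partial C$. Each $\partial C$ is homogeneous in $\mathfrak R=k[T]$ of degree $|C|-1\geq g(\mathcal A)-1$. Hence $(Q_{\mathcal A})_d=0$ for every $d\leq g(\mathcal A)-2$.

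For (i), fix $d\leq g(\mathcal A)-2$. Then
\[
(Q_{\mathcal A},\eta_1,\ldots,\eta_n)_d=(\eta_1,\ldots,\eta_n)_d,
\]
so $(\mathcal G_{\mathcal A})_d=\bigl(\mathfrak R/(\eta_1,\ldots,\eta_n)\bigr)_d$. By the proof of Corollary~\ref{cor:OT-artinian-reduction}(ii), the $\eta_j$ are $n$ independent linear forms, because the coefficient matrix has rank $n$. So $\mathfrak R/(\eta_1,\ldots,\eta_n)$ is a polynomial ring in $\epsilon(\mathcal A)=m-n$ variables. Counting monomials of degree $d$ in $m-n$ variables gives $\binom{\epsilon(\mathcal A)+d-1}{d}$. (Since the hyperplanes are distinct, there are no $2$-circuits and $g(\mathcal A)\geq 3$; hence $d=1$ is always covered, consistent with the embedding dimension count.)

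For (ii), set $\overline{\mathfrak R}:=\mathfrak R/(\eta_1,\ldots,\eta_n)$, a polynomial ring in $m-n$ variables. Then $(\mathcal G_{\mathcal A})_2=\overline{\mathfrak R}_2/\overline{(Q_{\mathcal A})_2}$, where the bar denotes the image. Let $q_2(\mathcal A)$ be the dimension of this image, i.e.\ the number of independent quadrics in the defining ideal of $\mathcal G_{\mathcal A}$ over $\overline{\mathfrak R}$. This gives the stated formula for $\dim_k(\mathcal G_{\mathcal A})_2$. It remains to bound $q_2(\mathcal A)\leq\dim_k(Q_{\mathcal A})_2\leq\nu_3(\mathcal A)$.

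Since $Q_{\mathcal A}$ has no linear forms, $(Q_{\mathcal A})_2$ is spanned by the degree-two generators $\partial C$ with $|C|=3$. So I need to count $3$-circuits. Three pairwise independent linear forms are dependent exactly when they span a $2$-dimensional space. That happens exactly when the three hyperplanes contain a common codimension two flat $X$, which is then unique. Conversely, any three indices in $S_X$ form a circuit. Thus the $3$-circuits correspond bijectively to triples inside some $S_X$ with $X\in L_2(\mathcal A)$, and their number is $\sum_X\binom{m_X}{3}=\nu_3(\mathcal A)$. This gives $q_2(\mathcal A)\leq\nu_3(\mathcal A)$.

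For (iii), if $g(\mathcal A)\geq4$, take $d=2$ in (i) to get $\dim_k(\mathcal G_{\mathcal A})_2=\binom{m-n+1}{2}$. Alternatively, $\nu_3(\mathcal A)=0$, because $m_X=2$ for all $X$ by Proposition~\ref{prop:reciprocal-fat-flats}, and then (ii) forces $q_2(\mathcal A)=0$. The only delicate point is the bijection between $3$-circuits and triples in the sets $S_X$, and this uses only that the hyperplanes are distinct.
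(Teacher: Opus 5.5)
Your proposal is correct and follows the paper's proof. Part (i) uses the vanishing of $Q_{\mathcal A}$ below degree $g(\mathcal A)-1$ together with $\mathfrak R/(\eta_1,\ldots,\eta_n)$ being a polynomial ring in $m-n$ variables, and part (ii) bounds $q_2(\mathcal A)$ by the number of $3$-circuit relations; your explicit bijection between $3$-circuits and triples inside the sets $S_X$ justifies the count $\nu_3(\mathcal A)$, which the paper only asserts.
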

		
		\begin{proof} (i)
			Write $\ell_i=\sum_{j=1}^n a_{ji}x_j$ and let
			$(a_{ji})$ be the associated $n\times m$ coefficient matrix. Since $\mathcal A$ has
			rank $n$, then so does this matrix. Hence, the linear forms $\eta_j$ are linearly independent in $\mathfrak{R}_1$. Therefore,
			upon a $k$-linear change of coordinates in $\mathfrak{R}$, one may assume that 
			$(\eta_1,\ldots,\eta_n)=(T_1,\ldots,T_n)$.
			Thus, $\mathfrak{R}/(\eta_1,\ldots,\eta_n)$
			is a polynomial ring in $m-n=\epsilon(\mathcal A)$ variables. In particular,
			for every $d\geq 0$,
			\[
			\dim_k\bigl(\mathfrak{R}/(\eta_1,\ldots,\eta_n)\bigr)_d
			=
			\binom{\epsilon(\mathcal A)+d-1}{d}.
			\]
			
			Now
			\[
			\mathcal G_{\mathcal A}
			\simeq
			\mathfrak{R}/(Q_{\mathcal A},\eta_1,\ldots,\eta_n)
			\simeq
			\bigl(\mathfrak{R}/(\eta_1,\ldots,\eta_n)\bigr)/
			\overline{Q_{\mathcal A}},
			\]
			where $\overline{Q_{\mathcal A}}$ denotes the image of $Q_{\mathcal A}$ in
			$\mathfrak{R}/(\eta_1,\ldots,\eta_n)$.
			
			The Orlik--Terao ideal $Q_{\mathcal A}$ is generated by circuit relations. If
			$C$ is a circuit of cardinality $r$, then the corresponding circuit relation
			has degree $r-1$. Hence, 
		%the smallest possible degree of anonzero element of $Q_{\mathcal A}$ is 
			$\indeg(Q_{\mathcal A})=g(\mathcal A)-1$. Thus,
			$(Q_{\mathcal A})_d=0$ for every $d\leq g(\mathcal A)-2$.
			Since $Q_{\mathcal A}$ is homogeneous, 
		%passing modulo the homogeneous linear ideal $(\eta_1,\ldots,\eta_n)$ cannot create elements of smaller degree.
		$(\overline{Q_{\mathcal A}})_d=0$ as well,
			for $d\leq g(\mathcal A)-2$.
			Thus, for every such $d$, one has
			$(\mathcal G_{\mathcal A})_d
			\simeq \bigl(\mathfrak R/(\eta_1,\ldots,\eta_n)\bigr)_d$.
			Consequently,
			\[
			\dim_k(\mathcal G_{\mathcal A})_d
			=
			\binom{\epsilon(\mathcal A)+d-1}{d}.
			\]
		
		(ii) Set $\mathfrak P=\mathfrak R/(\eta_1,\ldots,\eta_n)$ and let $\overline{Q_{\mathcal A}}$ be the
		image of $Q_{\mathcal A}$ in $\mathfrak P$. Then
		$\mathcal G_{\mathcal A}\simeq \mathfrak P/\overline{Q_{\mathcal A}}$. Since ${\mathfrak P}$ is a
		polynomial ring in $m-n$ variables, $\dim_k {\mathfrak P}_2=\binom{m-n+1}{2}$. By the
		definition of $q_2(\mathcal A)$, $\dim_k(\overline{Q_{\mathcal A}})_2=
		q_2(\mathcal A)$. Hence
		\[
		\dim_k(\mathcal G_{\mathcal A})_2
		=
		\dim_k {\mathfrak P}_2-\dim_k(\overline{Q_{\mathcal A}})_2
		=
		\binom{m-n+1}{2}-q_2(\mathcal A).
		\]
		
		The space $(Q_{\mathcal A})_2$ is spanned by the Orlik--Terao relations
		attached to $3$-circuits. The number of such circuits is
		$\nu_3(\mathcal A)=\sum_{X\in L_2(\mathcal A)}\binom{m_X}{3}$. Therefore the
		dimension of the image of $(Q_{\mathcal A})_2$ in ${\mathfrak P}_2$ is at most
		$\nu_3(\mathcal A)$, giving
		$0\leq q_2(\mathcal A)\leq \nu_3(\mathcal A)$.
	%	If $g(\mathcal A)\geq 4$, there are no $3$-circuits, so $(Q_{\mathcal A})_2=0$.	
		
		(iii)	
			It follows immediately from (i) and (ii).
		\end{proof}
		
	Returning to Example~\ref{main-ex}, one wishes to highlight two of its features.
	First, the purely combinatorial side, for which one labels the defining forms as
		\[
		\begin{array}{lll}
			\ell_1=x, & \ell_2=y, & \ell_3=z,\\
			\ell_4=w, & \ell_5=w+y, & \ell_6=w+z,\\
			\ell_7=w-x+y, & \ell_8=w-x+z, & \ell_9=w-x+y+z.
		\end{array}
		\]
		%Then
	%	\[n=4,\qquad m=9,\qquad \epsilon(\mathcal A)=9-4=5.\]
		In terms of the index $i$ of $\ell_i$ in the above labeling, the $3$-circuits of the arrangement are 
		\[
		\{1,5,7\},\quad \{1,6,8\},\quad \{2,4,5\},\quad
		\{2,8,9\},\quad \{3,4,6\},\quad \{3,7,9\}.
		\]
		They generate the quadratic sector of the Orlik--Terao ideal $Q_{\mathcal A}.$
	In addition, $g(\mathcal A)=3$, $\epsilon(\mathcal A)=9-4=5$, $\mu_2(\mathcal A)=3$, and $\nu_3(\mathcal A)=6$.  Upon the Jacobian reduction, the remaining degree-one space has dimension $\epsilon(\mathcal A)=5$, so the ambient quadratic space of the Artinian reduction has dimension $\binom{6}{2}=15$. The six triple flats are precisely the codimension two source of quadratic relations in this space.
		
		\vspace{0.2cm}
		
		For the more algebraic counterpart, one emphasizes the torsion Pfaffian element $P$ as a syzygetic obstruction.
		 Following \cite{SV-conormal}, recall the meaning of the syzygetic defect, a notion already mentioned in the previous section. Namely, if $\mathcal J\subseteq R$
		is an ideal, set
		\[
		\delta(\mathcal J):=\ker\bigl(\operatorname{Sym}_R^2(\mathcal J)\to {\mathcal J}^2\bigr).
		\]
		Then $\mathcal J$ is said to be syzygetic if $\delta(\mathcal J)=0$. On the other hand, an inclusion of ideals $\mathcal J\subseteq I$ induces an $R$-map $\operatorname{Sym}_R(\mathcal J)\to
		\operatorname{Sym}_R(I)$.  Define 
		\[
		\delta(\mathcal J,I):=\ker\bigl(\operatorname{Sym}_R^2(\mathcal J)\to
		\operatorname{Sym}_R^2(I)\bigr).
		\]
		
		One deals with the case where $I=(f/\ell_1,\ldots,f/\ell_m)\subseteq R$ as above and $\mathcal J=J_f$ is the gradient of $I$.
		The next proposition isolates the two possible sources of quadratic torsion. The
		ideal $I=(f/\ell_1,\ldots,f/\ell_m)$ has a syzygetic obstruction controlled by
		the $3$-circuits, while that of $J_f$ is subject to the behavior of the inclusion
		$J_f\subseteq I$.
		More precisely:
		
		\begin{Proposition}\label{thm:no-three-circuits-quadratic-obstruction}
			Let $\mathcal A$ be a central arrangement of rank $4$ over a field of
			characteristic zero. Assume that $g(\mathcal A)\geq4$. Then:
			\begin{enumerate}
				\item[{\rm (i)}]  $I$ is
				syzygetic.
				\item[{\rm (ii)}] $\delta(J_f)=\delta(J_f,I)$.	
				\item[{\rm (iii)}]  $J_f$ is syzygetic if and only if the
				map
				$
				\operatorname{Sym}_R^2(J_f)\longrightarrow \operatorname{Sym}_R^2(I)
				$
				is injective.
			\end{enumerate}
		\end{Proposition}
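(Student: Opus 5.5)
The plan is to derive (ii) and (iii) formally from (i), so most of the work goes into (i).

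For (i), the approach is to use the fiber/Rees structure of $I$ known from \cite{GST}. There, $I$ is an ideal of fiber type: the Rees ideal of $I$ is generated by the linear (syzygy) equations together with the Orlik--Terao ideal $Q_{\mathcal A}$ viewed inside $R[T_1,\ldots,T_m]$. Granting this, the degree-two part of the kernel of $\operatorname{Sym}_R(I)\to\Rees_R(I)$ is generated, modulo the $T$-linear equations, by the $T$-quadratic elements of the Rees ideal. By fiber type, these come from $(Q_{\mathcal A})_2$.

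The quadrics in $Q_{\mathcal A}$ are exactly the circuit relations $\partial C$ with $|C|=3$. By Proposition~\ref{prop:reciprocal-fat-flats} (equivalence (b)$\Leftrightarrow$(d)), the hypothesis $g(\mathcal A)\geq 4$ means there are no $3$-circuits, so $(Q_{\mathcal A})_2=0$. Hence every $T$-degree-two element of the Rees ideal of $I$ lies in the ideal generated by the linear equations, that is, $\delta(I)=\operatorname{Sym}^2_R(I)\to I^2$ has zero kernel, and $I$ is syzygetic.

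I expect the main obstacle to be the fiber-type statement. If \cite{GST} only provides it under extra hypotheses, the fallback is a direct argument. A quadratic torsion element of $\operatorname{Sym}^2_R(I)$ is annihilated by a power of $\fm$ after localizing at primes $\fp\neq\fm$. There $I$ is locally an ideal of the same type for a smaller-rank arrangement, and one would argue by induction on rank, starting from the fact that in rank $2$ the ideal $I_{P_X}=P_X^{m_X-1}=P_X$ is a complete intersection when $m_X=2$. One then controls $H^0_\fm$ in $T$-degree $2$ via the grading and the vanishing of $(Q_{\mathcal A})_2$: a torsion quadric specializes under $x\mapsto$ generic point of the fiber to an element of $(Q_{\mathcal A})_2=0$.

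For (ii), consider the commutative square formed by $\operatorname{Sym}^2_R(J_f)\to\operatorname{Sym}^2_R(I)$, the maps $\operatorname{Sym}^2_R(J_f)\to J_f^2$ and $\operatorname{Sym}^2_R(I)\to I^2$, and the inclusion $J_f^2\hookrightarrow I^2$. Chasing elements of $\operatorname{Sym}^2_R(J_f)$ around it, $\delta(J_f)$ is the kernel of the composite $\operatorname{Sym}^2_R(J_f)\to\operatorname{Sym}^2_R(I)\to I^2$, since $J_f^2\to I^2$ is injective. The kernel of this composite contains $\delta(J_f,I)$, and it equals $\delta(J_f,I)$ because $\operatorname{Sym}^2_R(I)\to I^2$ is injective by (i).

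Part (iii) is then immediate from (ii): $J_f$ is syzygetic if and only if $\delta(J_f)=0$, which by (ii) holds if and only if $\delta(J_f,I)=0$, that is, if and only if $\operatorname{Sym}^2_R(J_f)\to\operatorname{Sym}^2_R(I)$ is injective.
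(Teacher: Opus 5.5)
Your proposal is correct and matches the paper's proof: (i) comes from the fiber-type presentation $\mathcal Q=(\mathcal L,Q_{\mathcal A}S)$ together with $(Q_{\mathcal A})_2=0$, which holds because every circuit has size at least $g(\mathcal A)\geq 4$, so $Q_{\mathcal A}$ is generated in degrees $\geq 3$. Parts (ii)--(iii) follow from the same commutative square, using injectivity of $J_f^2\to I^2$ and of $\operatorname{Sym}^2_R(I)\to I^2$. The fiber-type property is used without extra hypotheses in the cited work, so your fallback argument is not needed, and the absence of $3$-circuits follows directly from the definition of $g(\mathcal A)$ rather than from Proposition~\ref{prop:reciprocal-fat-flats}.
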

		
		\begin{proof}
		(i)	Let $S=R[T_1,\ldots,T_m]$, and let $\mathcal L\subseteq \mathcal Q$ as previously denote,
			respectively, the defining ideal of $\mathcal R_R(I)$ and its syzygy-theoretic part. By
			\cite[Theorem~4.2]{GST}, the ideal $I$ is of fiber
			type, that is
			\[
			\mathcal Q=(\mathcal L,Q_{\mathcal A}S),
			\]
			where $Q_{\mathcal A}$ as before denotes the defining ideal of the Orlik--Terao fiber algebra.
		Since $g(\mathcal A)\geq4$, item (ii) of Proposition~\ref{prop:OT-reduction-low-degrees} gives
			$(Q_{\mathcal A})_2=0$. Hence,
			\[
			(\mathcal Q/\mathcal L)_2=0.
			\]
			Since $S/\mathcal L\simeq \operatorname{Sym}_R(I)$ and
			$S/\mathcal Q\simeq \mathcal R_R(I)$, this degree-two quotient is precisely
			$\delta(I)$. Thus $\delta(I)=0$.
			
		(ii) In the commutative diagram
			\[
			\begin{array}{ccc}
				\operatorname{Sym}_R^2(J_f) & \longrightarrow &
				\operatorname{Sym}_R^2(I)\\
				\downarrow && \downarrow\\
				J_f^2 & \longrightarrow & I^2 ,
			\end{array}
			\]
			the bottom map is trivially injective,
			and so is the right vertical map by item {\rm (i)}. Hence the kernel of the left vertical
			map is exactly the kernel of the top horizontal map, that is,
			\[
			\delta(J_f)=\delta(J_f,I).
			\]
			This proves {\rm (ii)}, and {\rm (iii)} follows immediately.
		\end{proof}
		Note that, for a rank $4$ arrangement,  if $g(\mathcal A)\geq5$, then
		$\mathcal A$ is generic, hence $J_f$ is of linear type by
		\cite[Theorem~3.10]{LORS}, in particular $\delta(J_f)=0$. Thus, a remnant query is:

		\begin{Question}\label{quest:almost-generic-syzygetic}
			Let $\mathcal A$ be a rank $4$ central arrangement over a field of
			characteristic zero. Assume that $g(\mathcal A)=4$. Is  $J_f$ syzygetic?
		\end{Question}
		
		Example~\ref{main-ex} shows that the hypothesis is sharp from the
		codimension-two point of view. Here one has
		$g(\mathcal A)=3$, $\mu_2(\mathcal A)=3$ and
		$\nu_3(\mathcal A)=6$, and the unique quadratic torsion class is represented
		by the Pfaffian equation. In this sense, the failure to satisfy the linear type property in this example is governed by the interaction between the two numbers
		\[
		\epsilon(\mathcal A)=m-n=5
		\qquad\text{and}\qquad
		\nu_3(\mathcal A)=6,
		\]
		the first of which measures the available space of relations upon the Jacobian
		reduction, while the second measures the quadratic circuit amount coming from
		triple flats.
		
		\emph{Added in time}: Upon the version of this paper on the ``arXiv'', 
		 U.~Walther kindly pointed out to his earlier paper~\cite{Walther}, in which
			hyperplane arrangements are discussed from the viewpoint of logarithmic forms and $D$-modules. In particular, it is shown in \cite[Corollary~3.23]{Walther}  that, under
			suitable tameness hypotheses, the Jacobian ideal is of linear type. In \cite[Example~5.7]{Walther}  a non-tame arrangement is described for which the
			annihilator of $f^s$ is not generated by order-one operators.
			
			Thus, Walther's work reveals, from the logarithmic and $D$-module side, a closely related phenomenon  to the Rees algebra obstruction in terms of a Pfaffian equation as established here.
			 Though this relationship has not been employed in the present paper, the authors thank Walther for his deep remarks and sugestions for future work on the discussions.
		
		%%%%%%%%%%%%%%%%%%%%%%%%%%%%%%%%%%%%%%%%%

		%%%%%%%%%%%%%%%%%%%%%%%%%%%%%%%%%%%%%%%%%%%%%%
		
	\end{document}